\newtheorem{theorem}{Theorem}
\newtheorem*{theorem*}{Theorem}
\newtheorem*{lemma*}{Lemma}
\newtheorem*{remark*}{Remark}
\newtheorem*{definition*}{Definition}
\newtheorem*{proposition*}{Proposition}
\newtheorem*{corollary*}{Corollary}
\numberwithin{equation}{section} \numberwithin{theorem}{section}
\numberwithin{proposition}{section} \numberwithin{lemma}{section}
\numberwithin{definition}{section} \numberwithin{corollary}{section}
\newcommand{\real}{\mathbb{R}}
\def\supp{\mathrm{supp}} 
\def\qed{\,\unskip\kern 6pt \penalty 500
\raise -2pt\hbox{\vrule \vbox to8pt{\hrule width 6pt
\vfill\hrule}\vrule}\par}
\newcommand{\beqn}{\begin{equation}}
\newcommand{\eeqn}{\end{equation}}
\newcommand{\bear}{\begin{eqnarray}}
\newcommand{\eear}{\end{eqnarray}}
\newcommand{\bean}{\begin{eqnarray*}}
\newcommand{\eean}{\end{eqnarray*}}
\begin{document}

\title{\huge \bf Equivalence and finite time blow-up of solutions and interfaces for two nonlinear diffusion equations}

\author{\Large Benito Hern\'andez-Bermejo\footnote{\textit{e-mail:} benito.hernandez@urjc.es}, \ Razvan Gabriel Iagar\footnote{\textit{e-mail:}
razvan.iagar@urjc.es}, \
Pilar R. Gordoa\footnote{\textit{e-mail:} pilar.gordoa@urjc.es},
\\ \Large Andrew Pickering\footnote{\textit{e-mail:} andrew.pickering@urjc.es}, \ Ariel S\'anchez\footnote{\textit{e-mail:} ariel.sanchez@urjc.es} \\ [5mm]
Universidad Rey Juan Carlos de Madrid, 28933, M\'ostoles, Madrid, Spain
}
\date{}
\maketitle

\begin{abstract}
In this work, we construct a transformation between the solutions to the following reaction-convection-diffusion equation
$$
\partial_t u=(u^m)_{xx}+a(x)(u^m)_x+b(x)u^m,
$$
posed for $x\in\real$, $t\geq0$ and $m>1$, where $a$, $b$ are two continuous real functions, and the solutions to the nonhomogeneous diffusion equation of porous medium type
$$
f(y)\partial_{\tau}\theta=(\theta^m)_{yy},
$$
posed in the half-line $y\in[0,\infty)$ with $\tau\geq0$, $m>1$ and suitable density functions $f(y)$. We apply this correspondence to the case
of constant coefficients $a(x)=1$ and $b(x)=K>0$. For this case, we prove that compactly supported solutions to the first equation blow up in finite time, together with their interfaces, as $x\to-\infty$. We then establish the large time behavior of solutions to a homogeneous Dirichlet problem associated to the first equation on a bounded interval. We also prove a finite time blow-up of the interfaces for compactly supported solutions to the second equation when $f(y)=y^{-\gamma}$ with $\gamma>2$.
\end{abstract}

\

\noindent {\bf AMS Subject Classification 2010:} 35B33, 35B40,
35K10, 35K67, 35Q79.

\smallskip

\noindent {\bf Keywords and phrases:} reaction-convection-diffusion equations,
nonhomogeneous equations, finite time blow-up, porous medium equation, correspondence of solutions.

\section{Introduction}

The goal of this paper is to establish an explicit correspondence between two equations with non-constant coefficients of different types, namely one involving diffusion, convection and reaction terms and the other of nonhomogeneous nonlinear diffusion type. More precisely, let us consider on the one hand the equation
\begin{equation}\label{eq1.gen}
u_t=(u^m)_{xx}+a(x)(u^m)_x+b(x)u^m,
\end{equation}
posed for $(x,t)\in\real\times(0,T)$, with $m>1$ and $a(x)$, $b(x)$ continuous real functions and where, as usual, the subscript indicates derivative. This equation presents three different effects: diffusion, convection and reaction, and the main interest in its study is related to how these terms compete, whether blow-up in finite time occurs or not and, if it occurs, in which sets. In the applications we consider the following interesting particular case of equation \eqref{eq1.gen}
\begin{equation}\label{eq1}
u_t=(u^m)_{xx}+(u^m)_x+Ku^m,
\end{equation}
with $K>0$.

On the other hand, let us consider the following nonhomogeneous equation of porous medium type:
\begin{equation}\label{eq2}
f(y)\theta_{\tau}=(\theta^m)_{yy},
\end{equation}
posed for $(y,\tau)\in[0,\infty)\times(0,T)$, with $m>1$ and a suitable function $f(y)$. Starting from the series of papers by Kamin and Rosenau \cite{KR81, KR82, KR83}, where equations of the form \eqref{eq2} were proposed as models for the thermal propagation by radiation in nonhomogeneous plasma, an extensive study of equation \eqref{eq2} for suitable density functions $f(y)$ has been developed over the last three decades by researchers such as Eidus, Galaktionov, Kamin, Kersner, King, Reyes, Tesei, V\'azquez et. al. and many interesting properties have been discovered. Most of this development has been devoted to densities of the form
\begin{equation}\label{powerlike.dens}
f(y)\sim |y|^{-\gamma}, \quad {\rm as} \ |y|\to\infty, \quad f(0)>0,
\end{equation}
where qualitative properties and large time behavior of solutions has been understood both for $\gamma\in(0,2)$ and for $\gamma>2$, see for example \cite{GKKV04, RV06, RV09, KRV10} and references therein. In particular, it has been noticed that $\gamma=2$ is a critical value that separates two different types of behavior for equation \eqref{eq2}. Thus, while for $\gamma\in(0,2)$ the dynamics of the equation is very similar to that of the standard porous medium equation (that is, $\gamma=0$), in the range $\gamma>2$ the dynamics proves to be very different \cite{KRV10}. Special solutions of self-similar type no longer exist for $\gamma>2$ and explaining the large time behavior leads to more complex considerations. In parallel to this study, the pure power densities
\begin{equation}\label{power.dens}
f(y)=|y|^{-\gamma}, \quad \gamma>0,
\end{equation}
were also considered, as it had been noticed that the profiles for asymptotic behavior to equation \eqref{eq2} with densities as in \eqref{powerlike.dens} come from special solutions to equation \eqref{eq2} with densities as in \eqref{power.dens}. These densities are more complicated to study due to the singularity at the origin, and they were considered first in dimension one in \cite{GK96, GKKV04} to establish self-similar solutions and to show the blow-up of interfaces in finite time, a result that will be very useful in the sequel. Deeper analysis, including a thorough study of well posedness, is performed in \cite{KT04} and later for $\gamma>2$ and general dimension $N\geq3$ in \cite[Section 6]{KRV10}. More recently, the critical exponent $\gamma=2$ was considered and in \cite{IS14, IS16} the large time behavior to equation \eqref{eq2} with $f(y)=|y|^{-2}$ is established. We also mention here the paper \cite{IRS13} where a number of mappings between solutions to equation \eqref{eq2} with $f(y)$ as in \eqref{power.dens} for different values of $\gamma$ have been obtained. Some of these correspondences will be very useful in the present work. Finally, equation \eqref{eq2} with exponential density function $f(y)=e^{-y}$ was proposed in \cite{GK96} where a study of its self-similar solutions is performed and blow-up of the interfaces in finite time for compactly supported data is proved.

Coming back to equation \eqref{eq1}, much less is known. This equation is considered in \cite{dPS00}, where solutions in the form of traveling waves are constructed both for $K<0$ and for $K>0$, with the form
\begin{equation}\label{TW}
u(x,t)=\Phi(x-st), \quad s>0, \ \lim\limits_{z\to\infty}\Phi(z)=0,
\end{equation}
and these solutions are compactly supported to the right (have an interface) for $K\in(0,1/4)$. equation \eqref{eq1} with $K>0$ is also a particular case of a more general equation considered by Suzuki in \cite{Su98}, where it is proved that the exponent $m$ in the reaction term lies below the Fujita-type exponent $m+2$ and there are no nontrivial global solutions, that is, all the solutions to equation \eqref{eq1} with $K>0$ blow up in finite time. On the contrary, equation \eqref{eq1} with $K<0$ (that is, when we are dealing with an absorption effect instead of reaction) can be easily mapped by a straightforward change of variable into the Fisher-KPP type equation
$$
u_t=(u^m)_{xx}+a(u^m)_x+u-u^m,
$$
whose qualitative theory and large time behavior are known (see for example \cite{Ro02, KR04}). This is why we focus in the part of applications in our paper to the reaction range $K>0$ and show how and where finite time blow-up occurs.

\medskip

\noindent \textbf{Description of the results and structure of the paper.} The core of the paper consists in pointing out a \emph{new transformation} mapping solutions to equation \eqref{eq1.gen} (and in particular, solutions to equation \eqref{eq1}) onto solutions to suitable nonhomogeneous equations of the form \eqref{eq2} with suitable density functions $f(y)$. This correspondence strongly generalizes previously established mappings that were considered in works such as \cite{Ki93, GPV00, Ro02, KR04, CSR08, IS16} only for some particular cases of the two equations involved. The transformation is constructed in Section \ref{sec.transf}. The rest of the paper is devoted to some \emph{applications of the transformation} with emphasis on the solutions to equation \eqref{eq1} for $K>0$.

Our first and most interesting application is to study \emph{how finite time blow-up occurs} for solutions to \eqref{eq1} when the coefficient of the reaction term is $0<K\leq1/4$. We define the \emph{blow-up time $T$} of a generic solution $u$ to equation \eqref{eq1} as the smallest time $T\in(0,\infty)$ such that $u(t)\in L^{\infty}(\real)$ for any $t\in(0,T)$ but $u(T)\not\in L^{\infty}(\real)$. We prove in Section \ref{sec.bu} that solutions to \eqref{eq1} with $0<K\leq1/4$ and with compactly supported initial condition $u_0(x)=u(x,0)$ blow up in finite time in a rather special way: at their blow-up time $T\in(0,\infty)$, $u(x,T)$ remains bounded at any finite point $x$. However, they still blow up at $t=T$, but only on curves $x(t)$ depending on $t$ such that $x(t)\to-\infty$ as $t\to T$. This phenomenon is known in literature as \emph{blow-up at (space) infinity} (in our case at $-\infty$). It seems to have been considered for the first time by Lacey \cite{La84}, and some other cases where it has been established, either for semilinear reaction-diffusion equations with rather large initial conditions or for quasilinear reaction-diffusion equations with weighted reaction, can be found in \cite{GU05, GU06, IS19}. The precise statement, which includes more initial data than the compactly supported ones, is given as Theorem \ref{th.bu} at the beginning of Section \ref{sec.bu}, and its proof is given in the Subsections \ref{sec.Ksmall} and \ref{sec.Kequal}.

For $K>1/4$, the transformation changes and its outcome limits us to consider the homogeneous Dirichlet problem for equation \eqref{eq1} posed in a bounded domain. In that case, we are able to apply the transformation and some classical results on equation \eqref{eq2} to establish \emph{the large time behavior} of solutions to equation \eqref{eq1} in a special bounded interval. This is the subject of Section \ref{sec.Klarge}.

Since the correspondence introduced in Section \ref{sec.transf} works in both direction, we are also able to apply it in order to improve on the theory of equation \eqref{eq2}. More precisely, we show in Section \ref{sec.app} that solutions to equation \eqref{eq2} with density function $f(y)=y^{-\gamma}$ and $\gamma>2$, with compactly supported initial conditions inside the half-line $(0,\infty)$, present a \emph{blow-up of the interface} in finite time, completing thus the results in \cite{GK96} to different types of initial data. This means that there exists $T\in(0,\infty)$ such that the solution $\theta(y,\tau)$ to equation \eqref{eq2} remains compactly supported for $\tau\in(0,T)$ but develops a tail as $y\to\infty$ at time $\tau=T$. We make this precise in Theorem \ref{th.nohom}.

We close the paper with an Appendix where some exact solutions are given and some natural extensions and open problems are discussed.

\section{The transformation}\label{sec.transf}

In this section we establish the mapping between solutions to equation \eqref{eq1} and equation \eqref{eq2}. In order to introduce the transformation in its most abstract and general form, let us begin with the more general reaction-convection-diffusion equation with nonconstant coefficients equation \eqref{eq1.gen},
where $a$, $b:\real\mapsto\real$ are two continuous functions. If $w$ and $v$ are two linearly independent solutions to the ordinary differential equation
\begin{equation}\label{ODE}
\sigma_{xx}+a(x)\sigma_x+b(x)\sigma=0,
\end{equation}
which is the stationary part of equation \eqref{eq1.gen}, introducing the Wronskian
$$
W(x)=w(x)v'(x)-w'(x)v(x),
$$
we can write equation \eqref{eq1.gen} in the following form
\begin{equation}\label{interm1}
\frac{w^3}{W^2}u_t=\frac{w^2}{W}\frac{\partial}{\partial x}\left(\frac{w^2}{W}\frac{\partial}{\partial x}\left(\frac{u^m}{w}\right)\right).
\end{equation}
Setting now
\begin{equation}\label{change}
\theta(y,t)=\frac{u(x,t)}{(w(x))^{1/m}}, \qquad y=\frac{v(x)}{w(x)},
\end{equation}
we readily notice that
$$
\frac{\partial}{\partial y}=\frac{w^2}{W}\frac{\partial}{\partial x},
$$
whence after straightforward calculations \eqref{interm1} writes as equation \eqref{eq2} in terms of $\theta$ and independent variables $(y,t)$, with
\begin{equation}\label{interm2}
f(y)=\frac{w(x)^{(3m+1)/m}}{W(x)^2}, \quad y=\frac{v(x)}{w(x)}.
\end{equation}

\noindent \textbf{The case of constant coefficients and equation \eqref{eq1}.} Let us particularize now the transformation to the case of constant coefficients $a(x)=a\neq0$, $b(x)=b>0$. By a standard rescaling and taking into account the change of variable $x\mapsto-x$ for the case $a<0$, we are easily mapped into equation \eqref{eq1}, that is, the case $a=1$, $b=K>0$. Thus, for simplicity we develop our applications for equation \eqref{eq1}. In this case \eqref{ODE} can be solved. Indeed, its characteristic polynomial is
$$
P(\lambda)=\lambda^2+\lambda+K,
$$
with roots
\begin{equation}\label{lambdas}
\lambda_1=\frac{-1-\sqrt{1-4K}}{2}, \qquad \lambda_2=\frac{-1+\sqrt{1-4K}}{2}.
\end{equation}
We split the analysis into three cases, according to the sign of $1-4K$.

\medskip

\noindent \textbf{Case 1. $-\infty<K<1/4$.} The two linearly independent solutions and its Wronskian become
$$
w(x)=e^{\lambda_1x}, \quad v(x)=e^{\lambda_2x}, \quad W(x)=(\lambda_2-\lambda_1)e^{(\lambda_1+\lambda_2)x}.
$$
The change of variables \eqref{change} writes in this particular case
\begin{equation}\label{change.Ksmall}
\theta(y,t)=\frac{u(x,t)}{e^{\lambda_1x/m}}, \quad y=e^{(\lambda_2-\lambda_1)x},
\end{equation}
and we derive from \eqref{interm2} that
\begin{equation*}
\begin{split}
f(y)&=\frac{e^{(3m+1)\lambda_1x/m}}{(\lambda_2-\lambda_1)^2e^{2(\lambda_1+\lambda_2)x}}=\frac{1}{(\lambda_2-\lambda_1)^2}e^{((m+1)\lambda_1-2m\lambda_2)x/m}\\
&=\frac{1}{(\lambda_2-\lambda_1)^2}y^{((m+1)\lambda_1-2m\lambda_2)/m(\lambda_2-\lambda_1)}.
\end{split}
\end{equation*}
Making a further change of the time variable by letting $\tau=(\lambda_2-\lambda_1)^2t$, we get
\begin{equation}\label{eq2.Ksmall}
y^{-\gamma}\theta_{\tau}=(\theta^m)_{yy}, \quad \gamma=\frac{2m\lambda_2-(m+1)\lambda_1}{m(\lambda_2-\lambda_1)}=\frac{1}{2}\left[\frac{3m+1}{m}-\frac{m-1}{m\sqrt{1-4K}}\right].
\end{equation}
\noindent \textbf{Remark 1.} We can equate $K$ from the expression of $\gamma$ in \eqref{eq2.Ksmall} to obtain
\begin{equation}\label{interm9}
K=\frac{m(\gamma-2)(m\gamma-m-1)}{(2m\gamma-(3m+1))^2},
\end{equation}
hence $K\geq0$ for either $\gamma\in(0,(m+1)/m)$ or $\gamma>2$. Let us notice first that we get $\gamma=0$ (that is, the standard porous medium equation) for $K=2m(m+1)/(3m+1)^2$. On the other hand, the transformation leads to equation \eqref{eq1} with $K=0$ (that is, without reaction term) for $\gamma=(m+1)/m$, which is nothing else that the particular value in dimension $N=1$ of the following critical exponent
$$
m=m_{c,\gamma}:=\frac{N-2}{N-\gamma}
$$
introduced and studied in \cite{IRS13}. In this case, the transformation \eqref{change.Ksmall} becomes a transformation between the nonhomogeneous equation \eqref{eq2} with $m=m_{c,\gamma}$ and \eqref{eq1} with $K=0$, which generalizes the well-established transformation for the standard fast diffusion equation with critical exponent
$$
u_t=(u^m)_{xx}, \quad m=m_c:=\frac{(N-2)_+}{N}
$$
established formally by King \cite{Ki93} and used by Galaktionov, Peletier and V\'azquez \cite{GPV00} in their study of the qualitative properties of the critical fast diffusion. Let us also notice that we can obtain $K=0$ by letting $\gamma=2$. This is another particular case of the transformation that is already known, and in particular it has been used as a main tool in establishing the large time behavior for equation \eqref{eq2} with $f(y)=y^{-2}$ in \cite{IS16}.

\medskip

\noindent \textbf{Remark 2.} It may look surprising that we have a correspondence between an autonomous equation \eqref{eq1} and a non-autonomous equation \eqref{eq2}, as the former is invariant to space translations while the latter is not. However, using the change of variables \eqref{change.Ksmall} one can readily check that the invariance to translations in space for equation \eqref{eq1} is mapped into the following rescaling of equation \eqref{eq2}
\begin{equation}\label{resc.nohom}
\theta(y,\tau)=y_0^{\alpha}\overline{\theta}(yy_0,\tau), \quad \alpha=\frac{\gamma-2}{m-1}.
\end{equation}

\medskip

\noindent \textbf{Case 2. $K=1/4$.} In this case we have $\lambda_1=\lambda_2=-1/2$, hence
$$
w(x)=e^{-x/2}, \quad v(x)=xe^{-x/2}, \quad W(x)=e^{-x}.
$$
The change of variables \eqref{change} writes in this particular case (in the same independent variables $(x,t)$)
\begin{equation}\label{change.Kequal}
\theta(x,t)=e^{x/2m}u(x,t),
\end{equation}
thus the nonhomogeneous equation becomes
$$
f(x)\theta_t=(\theta^m)_{xx}, \quad f(x)=e^{(m-1)x/2m}.
$$
Letting furthermore
$$
y=-\frac{m-1}{2m}x, \quad \tau=\frac{(m-1)^2}{4m^2}t,
$$
we find our final nonhomogeneous equation in this case
\begin{equation}\label{eq2.Kequal}
e^{-y}\theta_{\tau}=(\theta^m)_{yy},
\end{equation}
which has been analyzed in \cite{GK96}. We will come back later to this study and its applications to equation \eqref{eq1}

\medskip

\noindent \textbf{Case 3. $K>1/4$.} In this case the roots in \eqref{lambdas} are complex and become
$$
\lambda_1=\frac{-1-i\sqrt{4K-1}}{2}, \quad \lambda_2=\frac{-1+i\sqrt{4K-1}}{2},
$$
hence
$$
w(x)=e^{-x/2}\cos\left(\frac{\sqrt{4K-1}}{2}x\right), \quad v(x)=e^{-x/2}\sin\left(\frac{\sqrt{4K-1}}{2}x\right)
$$
and the Wronskian is given by
$$
W(x)=\frac{\sqrt{4K-1}}{2}e^{-x}.
$$
The change of variables \eqref{change} writes in this particular case
\begin{equation}\label{change.Kbig}
\theta(y,t)=e^{x/2m}\frac{u(x,t)}{\cos(\sqrt{4K-1}x/2)^{1/m}}, \quad y=\frac{v(x)}{w(x)}=\tan\left(\frac{\sqrt{4K-1}}{2}x\right),
\end{equation}
thus we obtain from \eqref{interm2} that
\begin{equation}\label{interm4}
\begin{split}
f(y)&=e^{(m-1)x/2m}\frac{4}{4K-1}\cos\left(\frac{\sqrt{4K-1}}{2}x\right)^{(3m+1)/m}\\
&=\frac{4}{4K-1}e^{(m-1)\arctan(y)/m\sqrt{4K-1}}\left(\frac{1}{(1+y^2)}\right)^{(3m+1)/2m}.
\end{split}
\end{equation}
Let us notice that in performing the previous change of variables, we have to restrict ourselves to the bounded domain
$$
x\in\left(\frac{-\pi}{\sqrt{4K-1}},\frac{\pi}{\sqrt{4K-1}}\right),
$$
which allows for the new variable $y$ to be well defined. Thus, when $K>1/4$, the transformation loses some information as in this range of $K$, we have to consider equation \eqref{eq1} in a bounded domain (which shrinks as $K$ increases). However, we arrive to the nonhomogeneous equation \eqref{eq2}, with the specific function $f(y)$ given in \eqref{interm4}, which can be seen as
\begin{equation}\label{eq2.Kbig}
f(y)\theta_t=(\theta^m)_{yy}, \quad f(y)\sim|y|^{-(3m+1)/m} \ {\rm as} \ |y|\to\infty.
\end{equation}

\noindent \textbf{Remark.} It is not just a technical fact that the value $K=1/4$ plays such a role in the analysis of equation \eqref{eq1}. As also explained in the Introduction, it has been proved in \cite{dPS00} that traveling waves exist for $K<1/4$ (and in particular they are compactly supported for $0<K<1/4$), but for $K\geq1/4$ they cease to exist.

\section{Blow-up at $-\infty$ for $0<K\leq1/4$}\label{sec.bu}

As explained in the Introduction, from now on we assume that $K>0$ and apply the transformations in Section \ref{sec.transf} to the study of the finite time blow-up phenomenon for equation \eqref{eq1}. We already know that for $K>0$, every nontrivial solution to equation \eqref{eq1} blows up in finite time \cite{Su98}, thus we will focus on how this blow-up occurs, that is, at which points and with which rate. In the present section, we consider $K>0$ sufficiently small, more precisely
$$
0<K\leq\frac{1}{4},
$$
corresponding via the transformation \eqref{change.Ksmall} to the equation \eqref{eq2.Ksmall} with $-\infty<\gamma<(m+1)/m$. The main result concerning finite time blow-up is the following
\begin{theorem}\label{th.bu}
Let $u$ be a solution to equation \eqref{eq1} with $K\in(0,1/4]$ and such that its initial condition $u_0(x)=u(x,0)$ is continuous, bounded and has a support of the form ${\rm supp}\,u_0\subset[A,\infty)$ for some $A\in\real$. Then the solution $u$ blows up in finite time together with its interface to the left. Moreover, the blow-up occurs at $-\infty$, that is, $u(x,t)$ remains bounded as $t\to T$ at any point $x\in\real$, but there exist curves $x(t)$ such that
$$
\lim\limits_{t\to T}x(t)=-\infty, \quad \lim\limits_{t\to T}u(x(t),t)=+\infty.
$$
\end{theorem}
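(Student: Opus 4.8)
The plan is to transport the whole statement through the two explicit changes of variables of Section~\ref{sec.transf}, which turn \eqref{eq1} into a \emph{reaction-free} weighted diffusion equation; the finite-time blow-up in $L^\infty$ then comes essentially for free from Suzuki's theorem \cite{Su98}, while the exponential weight relating $u$ and $\theta$ forces that blow-up to sit at $x=-\infty$. I would handle the two ranges separately, since the transformation differs. For $K\in(0,1/4)$ I use \eqref{change.Ksmall}, landing on the power-density equation \eqref{eq2.Ksmall} with $\gamma<(m+1)/m$; for $K=1/4$ I use \eqref{change.Kequal}, landing on the exponential-density equation \eqref{eq2.Kequal} of \cite{GK96}. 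In both cases $\theta$ solves an equation with no zero-order term, so constants are supersolutions and the comparison principle yields $0\le\theta\le\|\theta_0\|_\infty$ for all time, provided $\theta_0$ is bounded.

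The core of the argument is then purely qualitative. In Case~1 one has $u=e^{\lambda_1 x/m}\theta$ with $\lambda_1<0$ and $\lambda_2-\lambda_1>0$, so the weight is bounded on every half-line $[x_0,\infty)$ but blows up as $x\to-\infty$ (equivalently $y\to0^+$); in Case~2, $u=e^{y/(m-1)}\theta$ with the same feature as $x\to-\infty$ (now $y\to+\infty$). Since $\theta$ is bounded, for each fixed $x$ we get $u(x,t)\le(\text{weight at }x)\cdot\|\theta_0\|_\infty<\infty$ uniformly in $t$, which is precisely the assertion that $u$ stays bounded at every finite point. On the other hand \cite{Su98} guarantees a finite blow-up time $T$ with $u(T)\notin L^\infty$; because $\theta$ never blows up, the divergence of $\sup_x u(\cdot,t)$ can only be produced by the weight, i.e.\ by the support of $u$ reaching $x=-\infty$. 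Tracking a maximizing point $x(t)$ of $u(\cdot,t)$ (it exists once $\theta_0$ is bounded, as then $u\to0$ at $+\infty$), one sees that $x(t)$ cannot remain in any $[x_0,\infty)$, whence $x(t)\to-\infty$ and $u(x(t),t)=\sup_x u(\cdot,t)\to+\infty$: these are the required blow-up curves. The same reading shows the interface of $u$ is pushed to $-\infty$ exactly as $t\to T$, which in the transformed variables is the finite-time blow-up of the interface of $\theta$, towards $y=0$ in Case~1 and towards $y=+\infty$ in Case~2.

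The main obstacle is to secure the two endpoint behaviors that make the dichotomy ``bounded at finite $x$ / infinite at $-\infty$'' genuine. First, boundedness of $\theta$ requires $\theta_0$ bounded, i.e.\ enough decay of $u_0$ as $x\to+\infty$ (the weight is $e^{|\lambda_1|x/m}$, resp.\ $e^{-y/(m-1)}$, so $\theta_0$ is unbounded for, say, constant data, and then blow-up would instead occur at finite points); to reach all data with $\supp u_0\subset[A,\infty)$ I would squeeze $u$ between a compactly supported subsolution, which already blows up at $-\infty$ and forces the blow-up curves from below via the comparison principle for \eqref{eq1}, and a suitable decaying supersolution, rather than rely on a single global bound. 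Second, I must make sure the maximizing curve really escapes to $-\infty$ and does so precisely at $T$; the cleanest way is to feed in the known finite-time interface blow-up for the densities $|y|^{-\gamma}$ and $e^{-y}$ from \cite{GK96,GKKV04}, which not only places the interface of $\theta$ at the boundary at a finite rescaled time but also supplies the lower bound on $\theta$ just behind the interface needed to conclude $u=(\text{weight})\cdot\theta\to+\infty$ along $x(t)\to-\infty$.
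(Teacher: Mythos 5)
Your qualitative dichotomy (bounded $\theta$ plus Suzuki's finite-time blow-up forces the blow-up to $x=-\infty$) is sound in spirit, and part of it is even anticipated by the authors: for compactly supported data the bound $\theta\le\|\theta_0\|_\infty$ gives $u(x,t)\le e^{\lambda_1x/m}\|\theta_0\|_\infty$, hence boundedness at every finite $x$, and the Appendix remarks that the same conclusion follows by comparison from above with the explicit solutions \eqref{expl1}. But your plan misfires on the hard input for the main range of $K$. For $K\in(0,1/4)$ the transformed equation \eqref{eq2.Ksmall} has $\gamma<(m+1)/m<2$ (indeed $\gamma\le 0$ once $K\ge 2m(m+1)/(3m+1)^2$), and the interface blow-up results of \cite{GK96,GKKV04} that you invoke concern $\gamma>2$ or the exponential density: they say nothing about this range. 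The relevant phenomenon at $y=0$ is the focusing (hole-filling) of the left interface of $\theta$, and the paper obtains both its occurrence in finite time and the local behavior there by the further change \eqref{interm6} of \cite{IRS13} into the radial porous medium equation \eqref{radial.PME}, followed by the Aronson--Graveleau focusing asymptotics \cite{AG93}, \cite[Proposition 19.13]{VPME}. Your citation of \cite{GK96} is the correct input only in the case $K=1/4$, where the paper indeed argues exactly from the self-similar solutions \eqref{sols.Kequal} and \cite[Theorem 3]{GK96}.

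More fundamentally, the soft argument cannot deliver the simultaneity ``$u$ blows up \emph{together with} its interface.'' While the left interface $x_\ell(t)$ of $u$ is finite you have $\|u(t)\|_\infty\le e^{\lambda_1x_\ell(t)/m}\|\theta_0\|_\infty$, so your reasoning yields interface blow-up at the hole-filling time $T$ and function blow-up at some Suzuki time $T'\ge T$, but not $T'=T$. Closing this gap is precisely where the paper does its real work: at the focusing time $\theta(y,\tau_0)\sim y^{\phi(\gamma,\beta_*)}\to0$ as $y\to0^+$ while the weight $e^{\lambda_1x/m}$ diverges, and divergence of the product is \emph{not} automatic --- it is equivalent to the sign condition $\lambda_1/m+(\lambda_2-\lambda_1)\phi(\gamma,\beta_*)<0$, which the paper verifies using the anomalous exponent bound $\beta_*\in(1/2,1)$ (and, for $\gamma=0$, the explicit exponent $-1/(3m+1)$; for $K=1/4$, the bound $\lambda\in(1,2)$ giving the rate $(\overline{T}-t)^{(\lambda-2)/2(m-1)}$). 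You acknowledge needing ``a lower bound on $\theta$ just behind the interface'' but never confront this competition between vanishing profile and exploding weight, which is the heart of the proof; without it, it is a priori possible that $u(\cdot,T)$ remains bounded and blow-up occurs strictly later, when no interface remains. Two secondary points: your fix for non-decaying data (where $\theta_0$ is indeed unbounded, as you correctly note) is left entirely schematic, and the comparison principle you use on the half-line with singular density $y^{-\gamma}$ is itself delicate (cf. \cite{KT04}) --- the paper sidesteps both issues by working in the radial PME variables.
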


\noindent \textbf{Remark.} Compactly supported initial data $u_0$ are included in Theorem \ref{th.bu} but the statement is more general.

The proof of Theorem \ref{th.bu} is based on the transformation introduced in Section \ref{sec.transf} and differs with respect to the value of $K$. This is why we divide it into two subsections.

\subsection{Proof of Theorem \ref{th.bu} for $0<K<1/4$}\label{sec.Ksmall}

We postpone for the moment the limit case $K=1/4$ and divide the analysis into three cases.

\medskip

\noindent \textbf{The particular case $K=2m(m+1)/(3m+1)^2$.} As we saw, this corresponds to $\gamma=0$, that is, \eqref{eq2.Ksmall} becomes the standard porous medium equation
\begin{equation}\label{PME}
\theta_{\tau}=(\theta^m)_{yy}.
\end{equation}
Let us recall that the transformation in this particular case has been noticed before in \cite{CSR08}. We begin with an explicit example which shows neatly how finite time blow-up of both interfaces and functions takes place. Let
\begin{equation}\label{trans.Baren}
B_{C,y_0}(y,\tau)=\tau^{-1/(m+1)}\left(C-k\frac{(y-y_0)^2}{\tau^{2/(m+1)}}\right)_{+}^{1/(m-1)}
\end{equation}
be a translated Barenblatt solution to \eqref{PME} \cite{VPME}. This solution has a left-interface at $y=0$ exactly for
$$
\tau_0=\left(\frac{ky_0^2}{C}\right)^{(m+1)/2}
$$
and ${\rm supp}\,B_{C,y_0}\subset(0,\infty)$ for $\tau\in(0,\tau_0)$. For $\tau=\tau_0$, the behavior near the interface of $B_{C,y_0}$ is given by
\begin{equation}\label{interm5}
B_{C,y_0}(y,\tau_0)=C^{1/(m-1)}\left(\frac{y(2y_0-y)}{y_0^2}\right)^{1/(m-1)}\sim(2C)^{1/(m-1)}\left(\frac{y}{y_0}\right)^{1/(m-1)}.
\end{equation}
Coming back to equation \eqref{eq1} by \eqref{change.Ksmall}, we find an explicit solution $u_{B,0}(x,t)$ to equation \eqref{eq1} whose exact expression is given by \eqref{expl0} in the Appendix. It is compactly supported for
$$
0<t<T:=\frac{1}{(\lambda_2-\lambda_1)^2}\tau_0,
$$
while its interface \emph{blows up to the left in finite time} at $t=T$ since $x=\log\,y/(\lambda_2-\lambda_1)\to-\infty$ as $y\to0$. Moreover, as $\tau\to\tau_0$, that is, $t\to T$, we have
\begin{equation*}
\begin{split}
u_{B,0}(x,t)&=e^{\lambda_1x/m}\theta(y,\tau)\sim Ke^{\lambda_1x/m}y^{1/(m-1)}\\
&=K\exp\left[\left(\frac{\lambda_2-\lambda_1}{m-1}+\frac{\lambda_1}{m}\right)x\right]
\end{split}
\end{equation*}
and since
$$
\frac{\lambda_2-\lambda_1}{m-1}+\frac{\lambda_1}{m}=-\frac{1}{3m+1}<0,
$$
we obtain that the solution $u_{B,0}$ to \eqref{eq1} is compactly supported for $0\leq t<T$ and both its interface and the function itself blow up in finite time at $t=T$ in the limit $x\to-\infty$. We plot in Figure \ref{fig1} in parallel the two explicit solutions $B_{C,y_0}$ and $u_{B,0}$ at different moments of time to show how the approaching and then focusing as $y\to0$, respectively blow-up as $x\to-\infty$ produce.
\begin{figure}[ht!]
  \begin{center}
  \subfigure[Explicit solution to equation \eqref{eq2}]{\includegraphics[width=7cm,height=5.5cm]{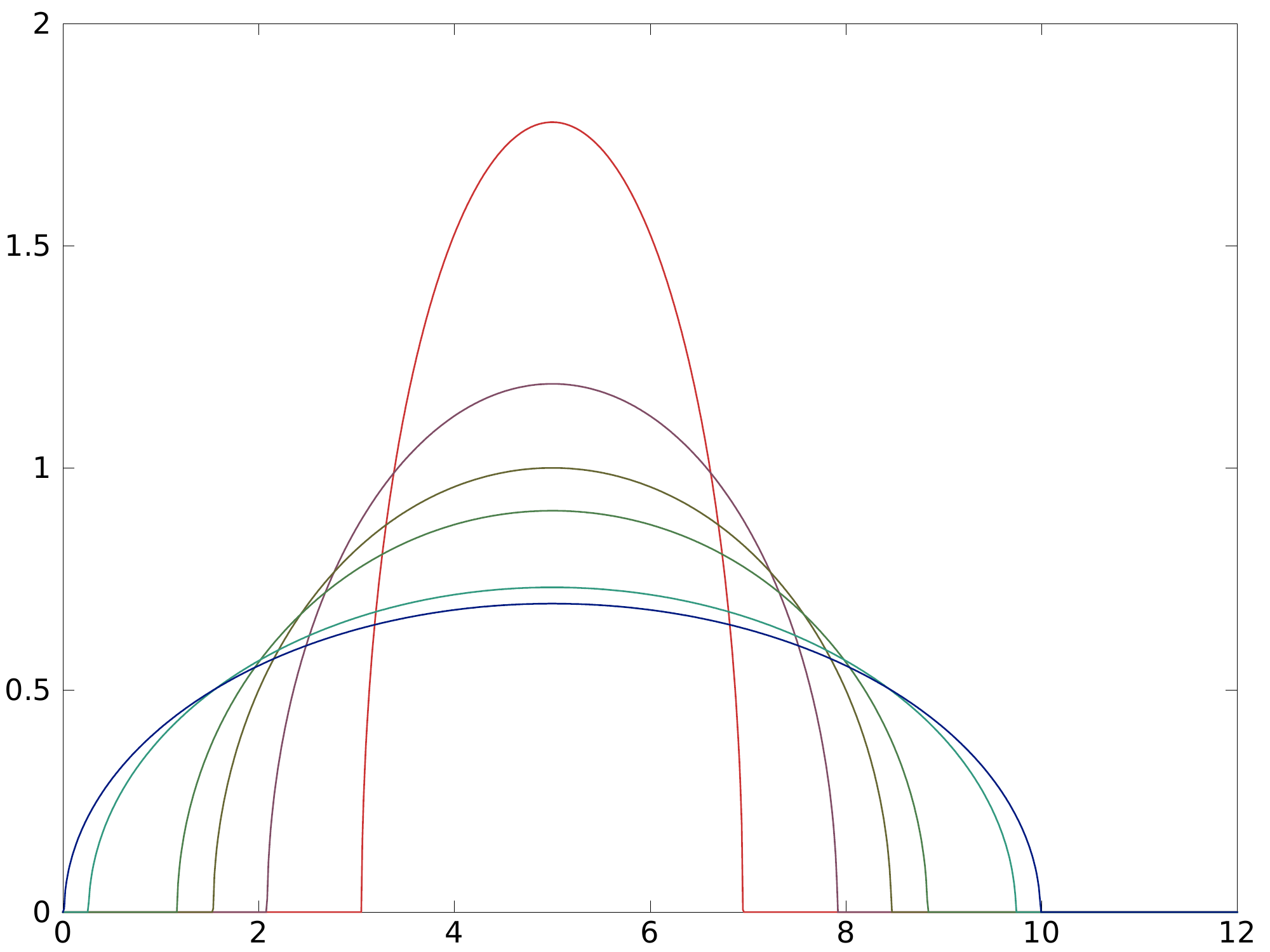}}
  \subfigure[Explicit solution to equation \eqref{eq1}]{\includegraphics[width=7cm,height=5.5cm]{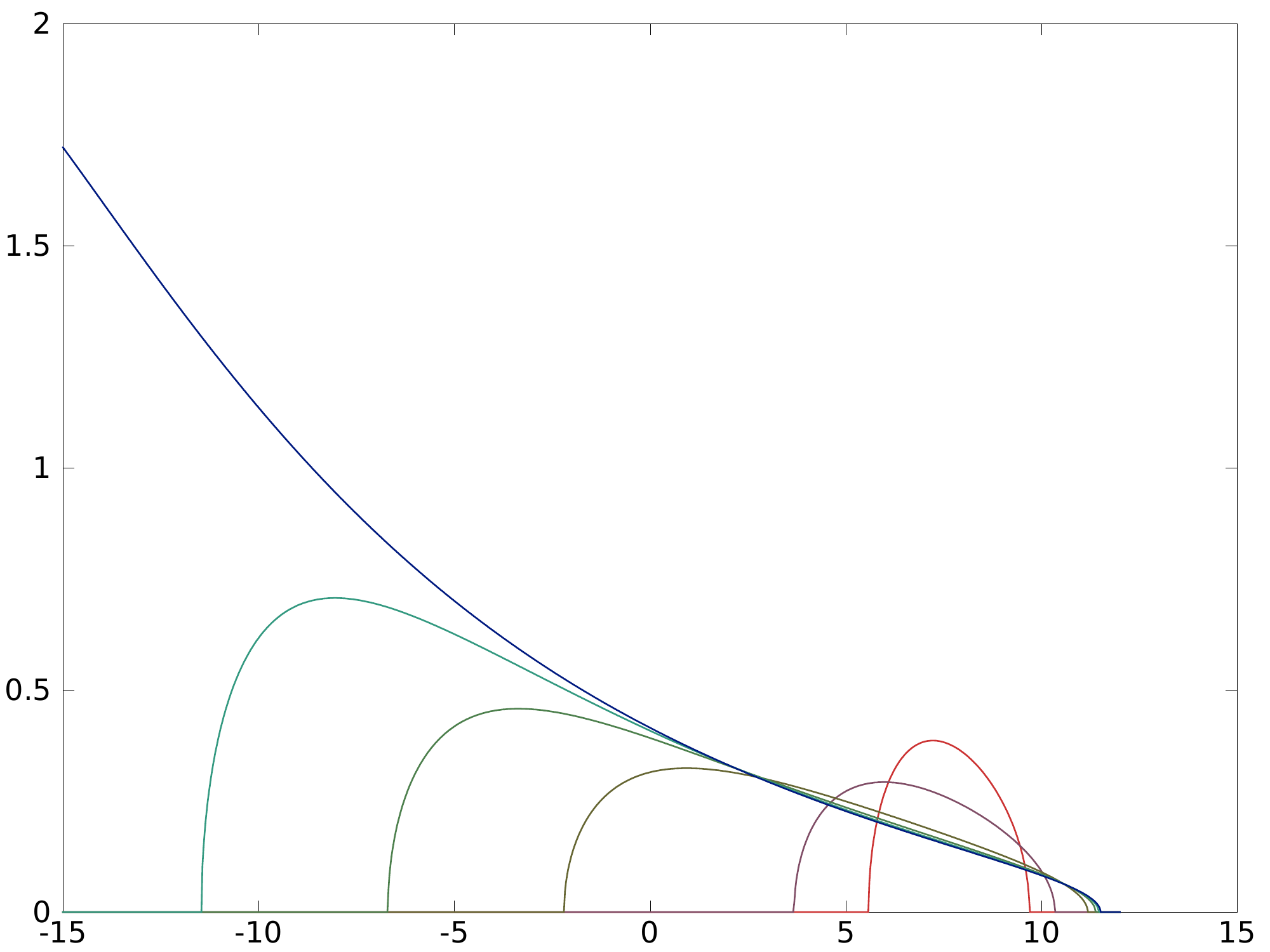}}
  \end{center}
  \caption{Explicit solutions presenting focusing as $y\to0$, respectively finite time blow-up as $x\to-\infty$. Experiment for $m=3$, $C=1$, $y_0=5$.}\label{fig1}
\end{figure}

The above phenomenon is in fact general. Let $\theta$ be a solution to \eqref{PME} with compactly supported initial condition $\theta_0$ such that ${\rm supp}\,\theta_0\subset(0,\infty)$. An easy consequence of \cite[Theorem 15.24, Section 15.5]{VPME} shows that (after a possible waiting time), $\theta$ behaves in a small inner neighborhood of the interface point (to the left) with the same rate as in \eqref{interm5}. Thus, the solution $u(x,t)$ transforming into $\theta$ by \eqref{change.Ksmall} is compactly supported for $0<t<T$ (where $T$ comes from the time required for the left interface of $\theta$ to "fill the hole" and reach the origin) and blows up in finite time, together with its interface, at $t=T$ and $x\to-\infty$.

\medskip

\noindent \textbf{Case 2. $0<K<2m(m+1)/(3m+1)^2$.} According to \eqref{change.Ksmall} and Remark 2 in Section \ref{sec.transf}, this range corresponds to equation \eqref{eq2.Ksmall} with $\gamma\in(0,(m+1)/m)$. For this range of $\gamma$, we furthermore use the transformation in \cite[Section 2.1, Case 2]{IRS13} which writes
\begin{equation}\label{interm6}
\theta(y,\tau)=y^{1/m}w(y^{\mu},\mu^2\tau), \quad \mu=\frac{(1-\gamma)m+1}{2m}>0,
\end{equation}
which maps \eqref{eq2.Ksmall} into the standard porous medium equation in radial variable
\begin{equation}\label{radial.PME}
w_s=(w^{m})_{rr}+\frac{N-1}{r}(w^m)_r, \quad r=y^{\mu}, \ s=\mu^2\tau, \quad N=2+\frac{1}{\mu}.
\end{equation}
For any solution $w$ to \eqref{radial.PME} with radially symmetric initial condition $w_0$ such that ${\rm supp}\,w_0\subseteq B(0,r_2)\setminus B(0,r_1)$ for some $r_1<r_2$, it is well known that the solution $w(r,s)$ converges asymptotically, according to \cite[Proposition 19.13, Chapter 19]{VPME}, to the self-similar focusing profile introduced by Aronson and Graveleau \cite{AG93} (see also \cite[Section 19.2]{VPME}). In particular, for any $s>0$ there exist $r_1(s)<r_2(s)$ such that ${\rm supp}\,w(s)\subseteq B(0,r_2(s))\setminus B(0,r_1(s))$ and there exists $s_0>0$ such that $r_1(s_0)=0$. It is also known that when $s\to s_0$, the interface behavior is given by
\begin{equation}\label{interm7}
w(r,s)\sim r^{(2\beta_*-1)/(m-1)\beta_*},
\end{equation}
where $\beta_*\in(1/2,1)$ is the self-similar anomalous focusing exponent \cite[Section 19.2, p. 459]{VPME}. Combining \eqref{interm6} and \eqref{interm7} and taking into account that $\mu>0$, we find that the solution $\theta$ to \eqref{eq2.Ksmall} has a left interface at $y=0$ for $\tau=\tau_0=s_0/\mu^2$ and near the interface, \eqref{interm6} gives the local behavior
\begin{equation}\label{interm8}
\theta(y,\tau)\sim y^{1/m}y^{(2\beta_*-1)\mu/(m-1)\beta_*}=y^{\phi(\gamma,\beta_{*})},
\end{equation}
with
$$
\phi(\gamma,\beta_*)=\frac{1}{m}+\frac{(2\beta_*-1)(m+1-m\gamma)}{2m(m-1)\beta_*}.
$$
Passing now to \eqref{eq1} through the transformation \eqref{change.Ksmall}, we find that there exists
$$
T=\frac{\tau_0}{(\lambda_2-\lambda_1)^2}
$$
such that $u(x,t)$ is compactly supported for $0\leq t<T$ and its interface blows up to the left as $t\to T$. We furthermore infer from \eqref{interm8} and \eqref{change.Ksmall} that
$$
u(x,t)\sim e^{\lambda_1x/m}y^{\phi(\gamma,\beta_*)}\sim\exp\left[\left(\frac{\lambda_1}{m}x+(\lambda_2-\lambda_1)\phi(\gamma,\beta_*)\right)x\right].
$$
Let us notice first that, since $1/2<\beta_*<1$, we have $0<(2\beta_*-1)/\beta_*<1$. Recalling also the values of $\lambda_1$ and $\lambda_2$ from \eqref{lambdas} and the formula for $K$ with respect to $m$ and $\gamma$ in \eqref{interm9}, we have
$$
\sqrt{1-4K}=\frac{m-1}{|2m\gamma-(3m+1)|}=\frac{m-1}{3m+1-2m\gamma},
$$
and we further obtain
\begin{equation*}
\begin{split}
\frac{\lambda_1}{m}+(\lambda_2-\lambda_1)\phi(\gamma,\beta_*)&=\frac{\lambda_1}{m}+\frac{\lambda_2-\lambda_1}{m}+
\frac{(\lambda_2-\lambda_1)(2\beta_*-1)(m+1-m\gamma)}{2m(m-1)\beta_*}\\
&\leq\frac{\lambda_2}{m}+\frac{(\lambda_2-\lambda_1)(m+1-m\gamma)}{2m(m-1)}\\
&=\frac{1}{m}\left[-\frac{1}{2}+\frac{\sqrt{1-4K}}{2}+\frac{(m+1-m\gamma)\sqrt{1-4K}}{2(m-1)}\right]\\
&=\frac{m\gamma-(m+1)}{3m+1-2m\gamma}<0,
\end{split}
\end{equation*}
recalling that $0<\gamma<\frac{m+1}{m}$. It follows that the solution $u$ blows up in finite time at $t=T$ and at $x=-\infty$, similarly to Case 1.

\medskip

\noindent \textbf{Case 3. $2m(m+1)/(3m+1)^2<K<1/4$.} According to \eqref{change.Ksmall} and Remark 2 in Section \ref{sec.transf}, this range corresponds to equation \eqref{eq2.Ksmall} with $\gamma\in(-\infty,0)$. In such case, the nonhomogeneous equation \eqref{eq2.Ksmall} has not been deeply studied, but a simple inspection shows that the transformation \eqref{interm6} above, which maps \eqref{eq2.Ksmall} into the standard porous medium equation in radial variable \eqref{radial.PME}, works similarly as in Case 2. Thus, we can again start from the radial porous medium equation \eqref{radial.PME} and repeat the proof in Case 2 above to show that finite time blow-up (both of the interface and of the solution itself) occurs as $x\to-\infty$.

\subsection{Proof of Theorem \ref{th.bu} when $K=1/4$}\label{sec.Kequal}

In this part we deal with equation \eqref{eq1} with $K=1/4$. As explained in the Introduction, this number also appears as a limit for the existence of the traveling waves in \cite{dPS00}, thus it seems to be a critical coefficient for the equation. Recall that in this case, the change of variable \eqref{change.Kequal} maps solutions to equation \eqref{eq1} into solutions to the nonhomogeneous equation with exponential density \eqref{eq2.Kequal}. Our starting point will be in this case the study of equation \eqref{eq2.Kequal} performed in \cite[Section 4]{GK96}. According to it, equation \eqref{eq2.Kequal} has solutions of the form
\begin{equation}\label{sols.Kequal}
\theta(y,\tau)=(T-\tau)^{\gamma}f(\eta), \quad \eta=y+\lambda\ln\,(T-\tau), \ \gamma=\frac{\lambda-1}{m-1}>0,
\end{equation}
with $\lambda>1$ and presenting blow-up of the interface at $\tau=T$. The profiles $f(\eta)$ satisfy one of the following two behaviors as $\eta\to-\infty$:
\begin{equation}\label{beh1}
f(\eta)\sim\exp\left(-\frac{\lambda-1}{(m-1)\lambda}\eta\right), \quad {\rm as} \ \eta\to-\infty,
\end{equation}
or
\begin{equation}\label{beh2}
f(\eta)\sim\left(\frac{m-1}{m^2}\right)^{1/(m-1)}e^{-\eta/(m-1)}, \quad {\rm as} \ \eta\to-\infty.
\end{equation}
Moreover, it is proved in \cite{GK96} that there exists a unique $\lambda\in(1,2)$ such that there exists a unique profile with behavior as in \eqref{beh1}, for that particular (unique) value of $\lambda$, having an interface to the right. We transform these profiles to solutions to equation \eqref{eq1} by using \eqref{change.Kequal} to get
\begin{equation}\label{sol.Kequal}
u(x,t)=\left(\frac{m-1}{2m}\right)^{2\gamma}(\overline{T}-t)^{\gamma}e^{-x/2m}f\left(-\frac{m-1}{m}x+2\lambda\ln\frac{m-1}{2m}+\lambda\ln(\overline{T}-t)\right),
\end{equation}
where $\overline{T}=4m^2T/(m-1)^2$. Let us notice that the limit process $\eta\to-\infty$ changes into $x\to+\infty$ and that an interface to the right for the solution $\theta$ to equation \eqref{eq2.Kequal} is mapped into an interface to the left for the solution $u$ to equation \eqref{eq1} given by \eqref{sol.Kequal}. We divide the rest of our analysis into two cases according to the behaviors in \eqref{beh1} and \eqref{beh2}.

\medskip

\noindent \textbf{Case 1.} If we start from the unique profile $f(\eta)$ with behavior as in \eqref{beh1}, we obtain that as $x\to\infty$
\begin{equation*}
\begin{split}
u(x,t)&\sim C(\overline{T}-t)^{\gamma}\exp\left[-\frac{x}{2m}-\frac{\lambda-1}{\lambda(m-1)}\left(-\frac{m-1}{m}x+C+\lambda\ln(\overline{T}-t)\right)\right]\\
&=C(\overline{T}-t)^{\gamma}(\overline{T}-t)^{-(\lambda-1)/(m-1)}\exp\left[-\frac{x}{2m}+\frac{\lambda-1}{m\lambda}x\right]\\
&=C\exp\left[\frac{(\lambda-2)x}{2m\lambda}\right]\to0,
\end{split}
\end{equation*}
where we recall that $1<\lambda<2$ and by definition $\gamma=(\lambda-1)/(m-1)$, and we denote by $C>0$ a generic constant. We thus find that the solution $u$ has an exponentially decreasing tail as $x\to+\infty$. At the other end, we know that $u(x,t)$ present an interface to the left at some point
$$
x_0=-\frac{m-1}{m}x+C+\lambda\ln(\overline{T}-t),
$$
thus
$$
x=\frac{m\lambda}{m-1}\ln(\overline{T}-t)-\frac{m}{m-1}(x_0+C)\to-\infty, \quad {\rm as} \ t\to\overline{T},
$$
which proves the finite time blow-up of the interface. Due to the compensation of the terms in $(\overline{T}-t)$ in the calculation above, we also remark that for any $x\in\real$ fixed, $u(x,t)$ remains bounded for any $t>0$. We prove next that the solution $u$ blows up in finite time (at $t=\overline{T}$) as $x\to-\infty$. To this end, let us consider the \emph{self-similar directon} of the profile $f(\eta)$, that is, when $\eta=C$ constant, meaning
$$
x=\frac{m\lambda}{m-1}\ln(\overline{T}-t)-C
$$
and
\begin{equation}\label{interm10}
\begin{split}
u(x,t)&\sim C(\overline{T}-t)^{\gamma}e^{-x/2m}=C(\overline{T}-t)^{\gamma-\lambda/2(m-1)}\\
&=C(\overline{T}-t)^{(\lambda-2)/2(m-1)},
\end{split}
\end{equation}
which blows up as $t\to\overline{T}$, since $\lambda<2$. The latter does not only prove the finite time blow-up, but also shows that the blow-up rate is equal or faster than the rate given by $(\overline{T}-t)^{(\lambda-2)/2(m-1)}$.

\medskip

\noindent \textbf{Case 2.} If we start from a profile $f(\eta)$ with behavior as in \eqref{beh2}, we obtain that as $x\to\infty$
\begin{equation*}
\begin{split}
u(x,t)&\sim C(\overline{T}-t)^{\gamma}\exp\left[-\frac{x}{2m}-\frac{1}{m-1}\left(-\frac{m-1}{m}x+C+\lambda\ln(\overline{T}-t)\right)\right]\\
&=Ce^{x/2m}(\overline{T}-t)^{-1/(m-1)},
\end{split}
\end{equation*}
thus we find solutions such that $u(x,t)\to\infty$ as $x\to\infty$ and that blow up globally at $t=\overline{T}$. These solutions have an interface to the left that also blows up at $t=\overline{T}$.

\medskip

\noindent Using the large time behavior for any compactly supported solution to \eqref{eq2.Kequal} \cite[Theorem 3]{GK96} together with the transformation \eqref{change.Kequal}, we readily deduce that \emph{any general solution} $u(x,t)$ to equation \eqref{eq1} with $K=1/4$ with compactly supported initial condition blows up in finite time as $x\to-\infty$ exactly in the same way as described in Case 1 above, that is, with a blow-up rate equal or faster than the rate given by $(\overline{T}-t)^{(\lambda-2)/2(m-1)}$. This ends the proof of Theorem \ref{th.bu}.

\section{Case $K>1/4$. Large time behavior for a homogeneous Dirichlet problem}\label{sec.Klarge}

For $K>1/4$, we recall that through the transformation \eqref{change.Kbig}, we arrive to a nonhomogeneous equation of the form \eqref{eq2.Kbig} with the density function given by the more involved formula (with respect to the previous cases) \eqref{interm4}. In particular, we have to restrict ourselves to working in the bounded interval
\begin{equation}\label{interv.Dir}
x\in\left[-\frac{\pi}{\sqrt{4K-1}},\frac{\pi}{\sqrt{4K-1}}\right].
\end{equation}
It is established in \cite[Theorems 1 and 2]{GK96} that any solution to equation \eqref{eq2.Kbig} with compactly supported initial condition $\theta_0(y)=\theta(y,0)$ presents finite time blow-up of the interface at some time $t=T\in(0,\infty)$ and then can be continued for $t>T$ with the property $\lim\limits_{|y|\to\infty}\theta(y,t)=0$ for any $t>T$. Using our transformation \eqref{change.Kbig} we find
\begin{equation}\label{sol.Kbig}
u(x,t)=e^{-x/2m}\cos\left(\frac{\sqrt{4K-1}}{2}x\right)^{1/m}\theta(y,t)\to0, \quad {\rm as} \ x\to\pm\frac{\pi}{\sqrt{4K-1}},
\end{equation}
for any $t>0$. Thus, in the case $K>1/4$ our transformation gives information about the solutions to the homogeneous Dirichlet problem posed in the bounded interval \eqref{interv.Dir}. Since the density function satisfies
$$
f(y)\sim|y|^{-(3m+1)/m}, \quad {\rm as} \ |y|\to\infty,
$$
it follows that $f\in L^{1}(\real)$, thus it is well known that equation \eqref{eq2.Kbig} satisfies the \emph{property of isothermalization} \cite{KR82}: for any solution $\theta$ with a compactly supported initial condition $\theta_0(y)=\theta(y,0)$, $y\in\real$, letting
\begin{equation}\label{iso}
\overline{\theta}=\left[\int_{-\infty}^{\infty}f(y)\,dy\right]^{-1}\int_{-\infty}^{\infty}\theta_0(y)f(y)\,dy,
\end{equation}
then it is proved in \cite[Theorem 1]{KR82} that $\theta(y,t)\to\overline{\theta}$ as $t\to\infty$, with uniform convergence in compact sets. We may thus conclude from \eqref{iso} and \eqref{sol.Kbig} that
\begin{equation}\label{iso.eq1}
\lim\limits_{t\to\infty}u(x,t)=e^{-x/2m}\cos\left(\frac{\sqrt{4K-1}}{2}x\right)^{1/m}\overline{\theta},
\end{equation}
for any solution $u$ to the homogeneous Dirichlet problem for equation \eqref{eq1} in the interval \eqref{interv.Dir}, with initial condition $u_0$ such that
$$
{\rm supp}\,u_0\subset\left(-\frac{\pi}{\sqrt{4K-1}},\frac{\pi}{\sqrt{4K-1}}\right)
$$
and uniform convergence over compact sets inside the interval \eqref{interv.Dir}.

\medskip

\noindent \textbf{Remark.} The large time behavior given in \eqref{iso.eq1} extends the convergence result established for equations without convection terms in \cite[Remark, Section 7.2, p. 436]{S4}. In the latter case the limit profile is symmetrical, while in our case the effect of the convection term brings the presence of the factor $e^{-x/2m}$ breaking the symmetry of the limit function.

\section{An application for the nonhomogeneous porous medium equation}\label{sec.app}

In the previous sections of the paper, we have exploited the transformation introduced in Section \ref{sec.transf} and the general knowledge related to equation \eqref{eq2} with the corresponding density functions in order to obtain new results on the solutions to equation \eqref{eq1}. But the transformation works in both directions, and in the current section we apply it in order to complete the theory of the nonhomogeneous porous medium equation equation \eqref{eq2}. More precisely, we prove the following result.
\begin{theorem}\label{th.nohom}
Let $\theta$ be a solution to equation \eqref{eq2} posed for $y\in(0,\infty)$ with density $f(y)=y^{-\gamma}$, with $\gamma>2$ and such that its initial condition
$$
\theta_0(y)=\theta(y,0), \quad y\in[0,\infty)
$$
is continuous and compactly supported in $(0,\infty)$. Then the interface blows up in finite time to the right: there exists $T>0$ such that ${\rm supp}\,\theta(y,\tau)$ is compact in $(0,\infty)$ for any $\tau\in(0,T)$ but there exists $R>0$ such that $\theta(y,T)>0$ for any $y>R$.
\end{theorem}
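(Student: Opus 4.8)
The plan is to run the transformation of Section \ref{sec.transf} \emph{backwards}, exploiting the fact that for a fixed $K\in(0,1/4)$ equation \eqref{eq1} is linked to \emph{two} power densities, according to the two ways of labelling the independent solutions $w,v$ of \eqref{ODE}. Concretely, given $\gamma>2$ I would define $K$ by \eqref{interm9} and first check that $K\in(0,1/4)$. For $\gamma>2$ the three quantities $\gamma-2$, $m\gamma-m-1$ and $2m\gamma-(3m+1)$ are all positive, so $K>0$; and the algebraic identity
$$
\left(2m\gamma-(3m+1)\right)^2-4m(\gamma-2)(m\gamma-m-1)=(m-1)^2>0
$$
shows $K<1/4$ whenever $m>1$. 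Thus every $\gamma>2$ corresponds to an admissible coefficient of the reaction-convection-diffusion equation.

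Next I would set up the transformation \eqref{change}--\eqref{interm2} but with the roles of the two exponential solutions \emph{exchanged}, i.e. $w(x)=e^{\lambda_2x}$, $v(x)=e^{\lambda_1x}$, so that, writing $s=\sqrt{1-4K}$, we have $y=v/w=e^{(\lambda_1-\lambda_2)x}=e^{-sx}$. A direct computation with \eqref{interm2} then gives $f(y)=s^{-2}\,y^{-\gamma'}$ with exponent $\gamma'=\frac{3m+1}{2m}+\frac{m-1}{2ms}$, which is exactly the reflection $\gamma'=\frac{3m+1}{m}-\gamma$ of the branch appearing in \eqref{eq2.Ksmall}; as $K$ runs over $(0,1/4)$ this $\gamma'$ runs bijectively over $(2,\infty)$, and the time rescaling $\tau=s^2t$ normalises the constant $s^{-2}$ to $1$, producing precisely equation \eqref{eq2} with $f(y)=y^{-\gamma}$. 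The essential bookkeeping is that $y=e^{-sx}$ is decreasing, so $x\to-\infty$ corresponds to $y\to+\infty$ and $x\to+\infty$ to $y\to0^+$: the \emph{left} interface of $u$ becomes the \emph{right} interface of $\theta$, and a compactly supported datum $\theta_0$ with $\mathrm{supp}\,\theta_0\subset[c,d]\subset(0,\infty)$ is, via $u=w^{1/m}\theta=e^{\lambda_2x/m}\theta$, the image of a compactly supported $u_0$ with $\mathrm{supp}\,u_0\subset[-s^{-1}\ln d,\,-s^{-1}\ln c]$.

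With this dictionary the result is essentially a readout of Theorem \ref{th.bu}. Applying that theorem to the solution $u$ of \eqref{eq1} issued from this (compactly supported, hence admissible) datum $u_0$, the left interface $\zeta(t)$ of $u$ stays finite while $u$ is compactly supported on both sides for $t<T$, and $\zeta(t)\to-\infty$ as $t\to T$. Translating through $y=e^{-sx}$ and $\tau=s^2t$, the right interface $\eta(\tau)=e^{-s\zeta(t)}$ of $\theta$ stays finite for $\tau<T':=s^2T$ (while its left interface stays bounded away from $0$, by the finite speed of propagation of the right interface of $u$), so $\theta$ remains compactly supported inside $(0,\infty)$, and $\eta(\tau)\to+\infty$ as $\tau\to T'$. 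Finally, at $t=T$ the support of $u$ sweeps out an entire left half-line, so $u(x,T)>0$ for all sufficiently negative $x$, say $x<x_\ast$; since $\theta(y,T')=u(x,T)\,e^{-\lambda_2x/m}$ is a product of two positive factors, it follows that $\theta(y,T')>0$ for all $y>R$ with $R=e^{-sx_\ast}$, which is exactly the tail asserted in the statement.

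The step I expect to be delicate is this last one, namely transferring the qualitative picture \emph{at} the critical time. Theorem \ref{th.bu} describes blow-up of $u$ as $x\to-\infty$ with boundedness at each finite $x$, but to conclude $\theta(\cdot,T')>0$ on a full half-line I must know that $\mathrm{supp}\,u(\cdot,T)$ really is all of a left half-line, not merely that the interface escaped to $-\infty$ along a sequence of times. I would secure this from the monotone spreading of the support together with the explicit near-interface asymptotics used in the proof of Theorem \ref{th.bu} (the profiles \eqref{beh1}--\eqref{beh2} and their porous-medium analogues), which give genuine positivity just inside the interface uniformly up to $t=T$; the remaining points — finite-speed boundedness of the right interface of $u$, and harmlessness of the constant $s^{-2}$ and of the time rescaling — are routine.
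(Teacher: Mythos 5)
Your proposal is correct and takes essentially the same route as the paper: the paper obtains the correspondence $K\in(0,1/4)\leftrightarrow\gamma\in(2,\infty)$ by first reflecting $x\mapsto-x$ (passing to equation \eqref{eq1.minus}) and then applying the Section \ref{sec.transf} transformation, which is exactly your relabelling $w\leftrightarrow v$ (i.e.\ $y=e^{-sx}$ in place of $y=e^{sx}$), after which the conclusion is read off from Theorem \ref{th.bu} just as you do. Your explicit check that $K\in(0,1/4)$ via the identity $\left(2m\gamma-(3m+1)\right)^2-4m(\gamma-2)(m\gamma-m-1)=(m-1)^2$, and your attention to the positivity of $u(\cdot,T)$ on a full left half-line (secured by the focusing asymptotics \eqref{interm5}, \eqref{interm7}--\eqref{interm8} rather than the $K=1/4$ profiles \eqref{beh1}--\eqref{beh2}), are if anything more detailed than the paper's brief argument.
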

\begin{proof}
This is now an easy consequence of the previous sections. Let us start from equation \eqref{eq1} with $K\in(0,1/4)$. Using the results in Section \ref{sec.Ksmall} and making the change of variable $x\mapsto-x$, we readily get that if $u$ is a solution to
\begin{equation}\label{eq1.minus}
u_t=(u^m)_{xx}-(u^m)_x+Ku^m, \quad 0<K<1/4,
\end{equation}
such that $\supp\,u_0(x)$ is compact, then there exists $T\in(0,\infty)$ such that $u$ blows up at $t=T$ as $x\to\infty$ (both the function and its interface). We then notice that we can use the same transformation in Section \ref{sec.transf} to reach equation \eqref{eq2} with $f(y)=y^{-\gamma}$, where
$$
\gamma=\frac{1}{2}\left(\frac{3m+1}{m}+\frac{m-1}{m\sqrt{1-4K}}\right).
$$
We thus infer that (with $m>1$ fixed) the range $K\in(0,1/4)$ is mapped into $\gamma\in(2,\infty)$ and a compactly supported initial condition $u_0$ to equation \eqref{eq1.minus} (and also to equation \eqref{eq1}) is mapped into an initial condition $\theta_0$ to equation \eqref{eq2} with compact support included in the open interval $(0,\infty)$. The blow-up of the interface for solutions to \eqref{eq1.minus} leads to the conclusion.
\end{proof}

\noindent \textbf{Remark.} Comparing equation \eqref{eq2} with $f(y)=y^{-\gamma}$ with on the one hand $\gamma\in(0,2)$ and on the other hand $\gamma>2$, we notice that if we start at $\tau=0$ with an initial condition $\theta_0(y)$ with compact support inside the open interval $(0,\infty)$, for $\gamma\in(0,2)$ we have \emph{finite time focusing} (that is, the interface to the left reaches the origin in finite time), while for $\gamma\in(2,\infty)$ we have finite time interface blow-up (that is, the interface to the right reaches $+\infty$ in finite time). This is a kind of "symmetry" with respect to $y=1$ that can be better understood by using the self-maps introduced in \cite{IRS13}.

\section{Appendix. Some explicit solutions, extensions and open problems.}

In this final section we gather some facts related to our two equations and the mapping between their solutions, such as some explicit solutions, some extensions of the transformation and a few open questions.

\medskip

\noindent \textbf{Some explicit solutions to equation \eqref{eq1}.} We start with the translated Barenblatt-type solution to the standard porous medium equation
$$
B_{C,y_0}(y,\tau)=\tau^{-1/(m+1)}\left[C-k\frac{(y-y_0)^2}{\tau^{2/(m+1)}})\right]_{+}^{1/(m-1)}, \quad k=\frac{m-1}{2m(m+1)},
$$
which is supported in a compact interval centered on $y_0>0$. Applying the change of variable \eqref{change.Ksmall} for $\gamma=0$ and the particular case
$K=2m(m+1)/(3m+1)^2$ and letting
$$
x_0=\frac{1}{\lambda_2-\lambda_1}\ln\,y_0, \quad \lambda=\lambda_2-\lambda_1
$$
where $\lambda_1$, $\lambda_2$ are as in \eqref{lambdas}, we obtain the following explicit solution to equation \eqref{eq1} with $K=2m(m+1)/(3m+1)^2$:
\begin{equation}\label{expl0}
u_{B,0}(x,t)=\lambda^{-2/(m+1)}t^{-1/(m+1)}e^{\lambda_1 x/m}\left[C-k\left(\frac{e^{\lambda x}-e^{\lambda x_0}}{t^{1/(m+1)}\lambda^{2/(m+1)}}\right)^{2}\right]_+^{1/(m-1)},
\end{equation}
which is compactly supported for small times $t>0$. We thus notice that the solution $u_{B,0}$ blows up as $x\to-\infty$ together with its interface and its blow-up time is given by
$$
T^{1/(m+1)}=\frac{e^{\lambda x_0}}{\lambda^{2/(m+1)}\sqrt{C/k}}.
$$

For $\gamma>0$, there exist the explicit Barenblatt-type solutions to equation \eqref{eq2} with $f(y)=y^{-\gamma}$ and $-\infty<\gamma<(m+1)/m$, which write \cite{KRV10, IRS13}
\begin{equation}\label{Bar}
B_{\gamma}(y,\tau)=\tau^{-\alpha}\left[C-k\left(\frac{y}{\tau^{\beta}}\right)^{2-\gamma}\right]_{+}^{1/(m-1)},
\end{equation}
where $C>0$ is a free constant and
\begin{equation}\label{exp}
\alpha=\frac{1-\gamma}{m+1-m\gamma}, \ \ \beta=\frac{1}{m+1-m\gamma}, \ \ k=\frac{m-1}{m(2-\gamma)(m+1-m\gamma)}.
\end{equation}
Notice that these solutions cannot be translated (as we did above for $\gamma=0$) since the equation equation \eqref{eq2} is no longer invariant to translations. Applying the change of variable \eqref{change.Ksmall} to \eqref{Bar} and letting again $\lambda=\lambda_2-\lambda_1$ we find after straightforward calculations the following explicit solution to equation \eqref{eq1} with $K\in(0,1/4)$
\begin{equation}\label{expl1}
u_B(x,t)=\lambda^{-2\alpha}t^{-\alpha}e^{\lambda_1 x/m}\left[C-k\left(\frac{e^{\lambda x}}{t^{\beta}\lambda^{2\beta}}\right)^{2-\gamma}\right]_+^{1/(m-1)},
\end{equation}
where $\alpha$, $\beta$, $\gamma$ are given by \eqref{exp} and $\lambda_1$, $\lambda_2$ are given by \eqref{lambdas}. This explicit solution (for any fixed $C>0$) has an interface to the right at any time $t>0$ and satisfies
$$
\lim\limits_{x\to-\infty}u_B(x,t)=+\infty, \quad {\rm for \ any} \ t>0.
$$
This is why this family of solutions does not enter in the framework of Theorem \ref{th.bu}: they have no interface to the left and moreover, they are already "blown up" from the beginning at $-\infty$. However, these solutions can be used for comparison from above to give an alternative proof of the fact that solutions to equation \eqref{eq1} as in the statement of Theorem \ref{th.bu} remain bounded forever at any fixed point $x\in\real$.

Another interesting explicit solution to \eqref{eq2} for $f(y)=y^{-\gamma}$ and $-\infty<\gamma<(m+1)/m$ is the dipole-type solution given by \cite{KR81, IRS13}
\begin{equation}\label{dip}
Z_{\gamma}(y,\tau)=\tau^{-(m+1-m\gamma)/m^2(2-\gamma)}y^{1/m}\left[C-k\left(\frac{y}{\tau^{1/m(2-\gamma)}}\right)^{(m+1-m\gamma)/m}\right]_{+}^{1/(m-1)}
\end{equation}
where $C>0$ is a free constant and $k$ is given in \eqref{exp}. Applying the change of variable \eqref{change.Ksmall} to \eqref{dip}, we find after straightforward calculations the following explicit solution to equation \eqref{eq1} with $K\in(0,1/4)$
\begin{equation}\label{expl2}
u_Z(x,t)=\lambda^{-2\nu}t^{-\nu}e^{\lambda_2 x/m}\left[C-k\left(\frac{e^{\lambda x}}{t^{1/\omega}\lambda^{2/\omega}}\right)^{(m+1-m\gamma)/m}\right]_+^{1/(m-1)},
\end{equation}
where $\nu=(m+1-m\gamma)/m(2-\gamma)>0$, $\omega=m(2-\gamma)$, $\lambda=\lambda_2-\lambda_1$ and again $\lambda_1$, $\lambda_2$ are given by \eqref{lambdas}. Since for $K\in(0,1/4)$ we have $\lambda_2<0$, this family of solutions has similar properties as the previous family obtained from the Barenblatt solutions: they have an interface to the right and
$$
\lim\limits_{x\to-\infty}u_Z(x,t)=+\infty, \quad {\rm for \ any} \ t>0.
$$

\medskip

\noindent \textbf{An extension of the transformation.} Our transformation in Section \ref{sec.transf} can be still extended to a slightly more general class of equations, namely
\begin{equation}\label{eq1.verygen}
w_s=(w^m)_{xx}+a(x)(w^m)_{x}+b(x)w^m+c(s)w, \quad (x,s)\in\real\times(0,\infty).
\end{equation}
Indeed, a solution $w$ to equation \eqref{eq1.verygen} can be mapped onto a solution $u$ to equation \eqref{eq1.gen} by the following mapping
$$
w(x,s)=f(s)u(x,t), \quad t=g(s),
$$
where $f$, $g$ are obtained as solutions to the differential equations $f'(s)=c(s)f(s)$ and $g'(s)=f^{m-1}(s)$. Joining this change of variable with the transformation in Section \ref{sec.transf}, we can also transform solutions to equation \eqref{eq1.verygen}.

\medskip

\noindent \textbf{Some open questions.} As we have seen, the transformation in Section \ref{sec.transf} has a serious technical limitation for $K>1/4$, where it is unable to map solutions whose support is not included inside the compact interval \eqref{interv.Dir}. Thus, it is a natural open question to ask whether blow-up in finite time for solutions to equation \eqref{eq1} occurs in the same way as described in Theorem \ref{th.bu}. We already know that on the one hand finite time blow-up must occur as proved in \cite{Su98} for any coefficient $K>0$, but on the other hand it seems that $K=1/4$ is not just a technical limitation but a critical coefficient for equation \eqref{eq1}, as also establised in \cite{dPS00}. Thus, it would be interesting to study closely the blow-up phenomenon for solutions to equation \eqref{eq1} also when $K>1/4$. Moreover, another natural open question is to extend the results to more general reaction-convection-diffusion with exponents different from $m$, that is
$$
u_t=(u^m)_{xx}+(u^q)_x+Ku^p,
$$
for different values of $m$, $q$ and $p$. In such cases it appears that there is no transformation mapping it onto another (better studied) equation, but at least for the range of exponents
$$
q>m-1, \quad \max\{m,q\}\leq p<\min\{m+2,q+1\}
$$
for which it is proved in \cite{Su98} that all the solutions blow up in finite time, one might expect that the result of Theorem \ref{th.bu} still holds true, as there is no particular reason for a change in the way how blow-up occurs. Techniques directly related to equation \eqref{eq1} should be used.

\section*{Acknowledgements} B. H.-B. acknowledges Ministerio de Econom\'ia, Industria y Competitividad for grants MTM2017-84383-P and MTM2016-80276-P. The work of P. R. G. and A. P. is supported by the Ministry of Economy and Competitiveness of Spain under contract MTM2016-80276-P (AEI/FEDER, EU). The work of A. S. is partially supported by the Spanish project MTM2017-87596-P.

\bibliographystyle{plain}

\begin{thebibliography}{1}

\bibitem{AG93}
D. G. Aronson, and J. Graveleau, \emph{A self-similar solution to the focusing problem for the porous medium equation}, Euro. J. Appl. Math, \textbf{4} (1993), no. 1, 65-81.

\bibitem{CSR08}
R. Cherniha, M. Serov, and I. Rassokha, \emph{Lie symmetries and form-preserving transformations of reaction-diffusion-convection equations}, J. Math. Anal. Appl., \textbf{342} (2008), no. 2, 1363-1379.

\bibitem{GKKV04}
V. A. Galaktionov, S. Kamin, R. Kersner and J. L. V\'azquez, \emph{Intermediate asymptotics for inhomogeneous nonlinear heat conduction}, J. Math. Sciences, \textbf{120} (2004), no. 3, 1277-1294.

\bibitem{GK96}
V. A. Galaktionov, and J. King, \emph{On behavior of blow-up interfaces for an inhomogeneous filtration equation}, IMA J. Appl. Math., \textbf{57} (1996), no. 1, 53-77.

\bibitem{GPV00}
V. A. Galaktionov, L. A. Peletier, and J. L. V\'azquez, \emph{Asymptotics of the fast-diffusion equation with critical exponent}, SIAM J. Math. Analysis, \textbf{31} (2000), no. 5, 1157-1174.

\bibitem{GU05}
Y. Giga, and N. Umeda, \emph{Blow-up directions at space infinity for solutions of semilinear heat equations}, Bol. Soc. Paran. Mat., \textbf{23} (2005), 9-28.

\bibitem{GU06}
Y. Giga, and N. Umeda, \emph{On blow-up at space infinity for semilinear heat equations}, J. Math. Anal. Appl., \textbf{316} (2006), 538-555.

\bibitem{IRS13}
R. G. Iagar, G. Reyes and A. S\'anchez, \emph{Radial equivalence of
nonhomogeneous nonlinear diffusion equations}, Acta Appl.
Math., \textbf{123} (2013), 53-72.

\bibitem{IS14}
R. G. Iagar, and A. S\'anchez, \emph{Large time behavior for the porous
medium equation in a nonhomogeneous medium with critical density},
Nonlinear Anal., \textbf{102} (2014), 226-241.

\bibitem{IS16}
R. G. Iagar, and A. S\'anchez, \emph{Asymptotic behavior for the
critical nonhomogeneous porous medium equation in low dimensions},
J. Math. Anal. Appl., \textbf{439} (2016), no. 2, 843-863.

\bibitem{IS19}
R. Iagar, and A. S\'anchez, \emph{Blow up profiles for a quasilinear reaction-diffusion equation with weighted reaction}, Submitted (January 2019), Preprint ArXiv no. 1811.10330.

\bibitem{KRV10}
S. Kamin, G. Reyes, and J. L. V\'{a}zquez, \emph{Long time behavior for
the inhomogeneous PME in a medium with rapidly decaying density},
Discrete Contin. Dyn. Syst. \textbf{26} (2010), no. 2, 521-549.

\bibitem{KR81}
S. Kamin, and P. Rosenau, \emph{Propagation of thermal waves in an
inhomogeneous medium}, Comm. Pure Appl. Math, \textbf{34} (1981),
no. 6, 831-852.

\bibitem{KR82}
S. Kamin, and P. Rosenau, \emph{Nonlinear thermal evolution in an
inhomogeneous medium}, J. Math. Phys., \textbf{23} (1982), no. 7,
1385-1390.

\bibitem{KR83}
S. Kamin, and P. Rosenau, \emph{Thermal waves in an absorbing and
convecting medium}, Phys. D., \textbf{8} (1983), no. 1-2, 273-283.

\bibitem{KR04}
S. Kamin, and P. Rosenau, \emph{Emergence of waves in a nonlinear convection-reaction-diffusion equation}, Adv. Nonl. Studies, \textbf{4} (2004), no. 3, 251-272.

\bibitem{KT04}
R. Kersner, and A. Tesei, \emph{Well posedness of initial value problems for singular parabolic equations}, J. Differential Equations, \textbf{199} (2004), no. 1, 47-76.

\bibitem{Ki93}
J. King, \emph{Self-similar behaviour for the equation of fast nonlinear diffusion}, Phil. Trans. Royal Society London A, \textbf{343} (1993), 337-375.

\bibitem{La84}
A. A. Lacey, \emph{The form of blow-up for nonlinear parabolic equations}, Proc. Royal Society Edinburgh Sect. A, \textbf{98} (1984), no. 1-2, 183-202.

\bibitem{dPS00}
A. de Pablo, and A. S\'anchez, \emph{Global travelling waves in reaction-convection-diffusion equations}, J. Differential Equations, \textbf{165} (2000), no. 2, 377-413.

\bibitem{RV06}
G. Reyes, and J. L. V\'{a}zquez, \emph{The Cauchy problem for the
inhomogeneous porous medium equation}, Network Heterog. Media
\textbf{1} (2006), no. 2 337-351.

\bibitem{RV09}
G. Reyes, and J. L. V\'{a}zquez, \emph{Long time behavior for the
inhomogeneous PME in a medium with slowly decaying density}, Commun.
Pure Appl. Anal., \textbf{8} (2009), no. 2, 493-508.

\bibitem{Ro02}
P. Rosenau, \emph{Reaction and Concentration Dependent Diffusion Model}, Phys. Review Letters, \textbf{88} (2002), no. 19, 194501-4

\bibitem{S4}
A. A. Samarskii, V. A. Galaktionov, S. P. Kurdyumov, and A. P.
Mikhailov, \emph{Blow-up in quasilinear parabolic problems}, de
Gruyter Expositions in Mathematics, \textbf{19}, W. de Gruyter,
Berlin, 1995.

\bibitem{Su98}
R. Suzuki, \emph{Existence and nonexistence of global solutions to quasilinear parabolic equations with convection}, Hokkaido Math. Journal, \textbf{27} (1998), no. 1, 147-196.

\bibitem{VPME}
J. L. V\'azquez, \emph{The porous medium equation. Mathematical theory}, Oxford Monographs in Mathematics, Oxford University Press, 2007.

\end{thebibliography}

\end{document}